\newtheorem{thm}{Theorem}[section]
\newtheorem{lem}[thm]{Lemma}
\newtheorem{prop}[thm]{Proposition}
\newtheorem{que}[thm]{Question}
\theoremstyle{definition}
\theoremstyle{remark}
\title[Virtual homological eigenvalue and mapping torus of pseudo-Anosov maps]{Virtual homological spectral radius and mapping torus of pseudo-Anosov maps}
\author[Hongbin~Sun]{Hongbin Sun}
\address{
    Department of Mathematics\\
    University of California, Berkeley\\
    Berkeley, CA 94720, USA}
\email{
    hongbins@math.berkeley.edu}
\subjclass[2010]{57M10, 57M27}
\thanks{The author is partially supported by NSF Grant No. DMS-1510383.}
\keywords{pseudo-Anosov maps, fibered $3$-manifolds, Alexander polynomial, Mahler measure.}
\date{}
\begin{document}

\begin{abstract}
In this note, we show that, if a pseudo-Anosov map $\phi:S\to S$ admits a finite cover whose action on the first homology has spectral radius greater than $1$, then the monodromy of any fibered structure of any finite cover of the mapping torus $M_{\phi}$ has the same property.
\end{abstract}

\maketitle

\section{Introduction}

For a pseudo-Anosov map $\phi:S\to S$, among the data one can naturally associate to it, there are two real numbers that are very interesting to the author. One number is its dilatation, i.e. the stretch factor of the invariant singular foliation/lamination, the other one is the spectral radius of (the greatest modulus of eigenvalues of) $\phi_*:H_1(S;\mathbb{C})\to H_1(S;\mathbb{C})$.

For a pseudo-Anosov map $(S,\phi)$, by saying the {\it spectral radius} of the mapping class $(S,\phi)$, we mean the spectral radius of its homological action $\phi_*:H_1(S;\mathbb{C})\to H_1(S;\mathbb{C})$.

The spectral radius of a pseudo-Anosov map is always smaller or equal to the dilatation. Actually, even if we take the supremum of the spectral radii among all finite covers of the pseudo-Anosov map (the virtual spectral radius), it is still usually strictly smaller than the dilatation. More precisely, in \cite{McM3}, McMullen proved that the strict inequality holds if and only if the invariant singular foliation has an odd index singular point in the interior of the surface.

On the other hand, we do not know whether the virtual spectral radius of a pseudo-Anosov map is always greater than one. In particular, the following question is raised in \cite{Ko1}, where Koberda attributes this question to McMullen.

\begin{que}\label{eigenvalue}
Does any pseudo-Anosov map on a surface can be lifted to some finite cover, such that the spectral radius of the lifted map is greater than $1$?
\end{que}

If there exists a finite cover $(\tilde{S},\tilde{\phi})$ of $(S,\phi)$ such that the spectral radius of $\tilde{\phi}$ is greater than $1$, we say that $(S,\phi)$ has the {\it spectral lifting property}.

In \cite{Ha}, it is shown that, for any infinite order mapping class on a surface with boundary, it can be lifted to a finite cover, such that the lifted action on the first homology has infinite order. This is positive evidence for Question \ref{eigenvalue}.

Another important object associated to a mapping class $\phi:S\to S$ is its mapping torus $M_{\phi}$, i.e. $M_{\phi}=S\times I/(x,0)\sim (\phi(x),1)$. There have been a lot of works on studying mapping classes by using $3$-manifold topology. For example, in \cite{McM1} and \cite{McM2}, McMullen used the Alexander polynomial and Teichm\"{u}ller polynomial to study the monodromy maps of different fibered classes in a fixed fibered cone of a $3$-manifold. In \cite{Ko2}, Koberda related the spectral lifting property with the homology growth of finite covers of fibered $3$-manifolds.

In this note, we show that whether a pseudo-Anosov map has the spectral lifting property is actually a property of the commensurable class of its mapping torus, instead of being a property of the mapping class itself. It can be seen as positive evidence for Question \ref{eigenvalue}. In particular, we show the following theorem (some terminology will be defined in Section \ref{preliminary}).

\begin{thm}\label{main}
Let $\phi:S\to S$ be a pseudo-Anosov map and let $M_{\phi}$ be its mapping torus, then the following statements are equivalent to each other.
\begin{enumerate}
\item There exists a finite cover $(\tilde{S}, \tilde{\phi})$ of $(S,\phi)$, such that the spectral radius of $(\tilde{S},\tilde{\phi})$ is greater than $1$.
\item $M_{\phi}$ has a finite cover $N$, such that the Mahler measure of the multivariable Alexander polynomial of $N$ is greater than $1$, i.e. $\mathbb{M}(\Delta_N)>1$.
\item For any finite cover $N'$ of $M_{\phi}$, and any fibered structure of $N'$, the monodromy has a finite cover such that its spectral radius is greater than 1.
\end{enumerate}
\end{thm}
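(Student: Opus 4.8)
The plan is to prove the three equivalences by connecting the homological spectral radius of a pseudo-Anosov map on a fiber surface with the Mahler measure of the multivariable Alexander polynomial of the mapping torus, and then exploiting the fact that all the objects in question (finite covers of $M_\phi$, their fibered structures) live in a single commensurability class. The cleanest route is to establish (1)$\Leftrightarrow$(2) first, since this is the analytic heart of the matter, and then observe that (2) is manifestly a commensurability invariant up to the operation of passing to finite covers, from which (2)$\Rightarrow$(3) and (3)$\Rightarrow$(1) follow by tracking fibered structures through covers.

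For (1)$\Leftrightarrow$(2): given a fibered $3$-manifold $M_\phi$ with fiber $S$ and monodromy $\phi$, the maximal free abelian cover corresponding to $H_1(M_\phi;\mathbb{Z})/(\text{torsion})$ has homology computable from the Alexander module. When $b_1(M_\phi)=1$ the situation is the classical one: the single-variable Alexander polynomial $\Delta_{M_\phi}(t)$ is (up to units and a cyclotomic factor from the fiber having boundary or from the torsion) the characteristic polynomial of $\phi_*$ acting on $H_1(S;\mathbb{C})$, so the Mahler measure $\mathbb{M}(\Delta_{M_\phi})$ equals the product of the moduli of eigenvalues of $\phi_*$ that lie off the unit circle; in particular $\mathbb{M}(\Delta_{M_\phi})>1$ exactly when the spectral radius of $\phi_*$ exceeds $1$. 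When $b_1(M_\phi)>1$, one uses a theorem relating the multivariable Alexander polynomial to the one-variable specializations: for a primitive class $\alpha\in H^1(M_\phi;\mathbb{Z})$ in the fibered cone dual to the fibration, the specialization of $\Delta_{M_\phi}$ along $\alpha$ recovers the characteristic polynomial of the corresponding monodromy (this is McMullen's result relating $\Delta$ to the Teichm\"uller polynomial and the dilatation, and the analogous homological statement), and conversely the Mahler measure of the multivariable polynomial dominates and is controlled by the Mahler measures of its specializations (by the Boyd--Lawton type continuity, $\mathbb{M}(\Delta_N)$ is the limit of $\mathbb{M}$ of specializations, and each specialization has Mahler measure $\ge 1$ with equality iff the polynomial is a product of cyclotomics in the specialized variable). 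So $\mathbb{M}(\Delta_N)>1$ for some finite cover $N$ iff some finite cover has a fibered face whose monodromy has homological spectral radius $>1$, and by passing between $S$ and $M_\phi$ via their fundamental groups, a finite cover of the pair $(S,\phi)$ corresponds to a finite cover of $M_\phi$ that is again fibered with fiber the cover of $S$. This gives (1)$\Leftrightarrow$(2).

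For (2)$\Rightarrow$(3) and (3)$\Rightarrow$(1): the key observation is that if $N$ is any finite cover of $M_\phi$ with $\mathbb{M}(\Delta_N)>1$, and $N'$ is any other finite cover of $M_\phi$, then passing to a common finite cover $N''$ of both $N$ and $N'$ (which exists since both are covers of $M_\phi$ — take the cover corresponding to the intersection of the two finite-index subgroups, or a common normal core) we have $\mathbb{M}(\Delta_{N''})>1$ as well, because Mahler measure is non-decreasing under the relevant covering operation on Alexander polynomials (the Alexander polynomial of a cover is divisible, up to the right normalization, by a product of "twisted" copies of the downstairs polynomial, or more directly $\mathbb{M}(\Delta_{N''})\ge\mathbb{M}(\Delta_N)^{1/[\cdot]}$-type estimates hold; the honest statement I would use is that if $\mathbb{M}(\Delta_N)>1$ then $\mathbb{M}(\Delta_{N''})>1$ for any finite cover $N''$ of $N$). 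Now $N''$ covers $N'$, so any fibered structure on $N'$ pulls back to a fibered structure on $N''$, and applying (2)$\Leftrightarrow$(1) to $N''$ with respect to this fibered structure shows the monodromy of $N'$ has the spectral lifting property. This proves (2)$\Rightarrow$(3). Finally (3)$\Rightarrow$(1) is immediate: take $N'=M_\phi$ with its given fibered structure, whose monodromy is $\phi$ itself.

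The main obstacle I expect is the multivariable case of (1)$\Leftrightarrow$(2): when $b_1(M_\phi)>1$ one must be careful that the multivariable Alexander polynomial genuinely sees the homological action of the monodromy and not merely some quotient of it, and one must control what happens to the Mahler measure under both specialization (restricting to the fibered cone) and under the change of variables induced by a finite cover. The Boyd--Lawton theorem (continuity of Mahler measure under specialization) and the precise relationship between $\Delta_N$ and the characteristic polynomial of $\phi_*$ on a fiber — including correctly handling the contribution of the boundary tori and the possible torsion in $H_1$ — are the technically delicate points; the covering-space bookkeeping in the second and third paragraphs, while requiring care about which subgroups and which normal closures to use, is essentially routine once the dictionary between covers of $(S,\phi)$ and fibered covers of $M_\phi$ is set up.
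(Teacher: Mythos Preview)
Your overall architecture matches the paper's: reduce everything to a ``local--global'' principle relating the Mahler measure of $\Delta_N$ to spectral radii of monodromies, then push through common covers. The direction (1)$\Rightarrow$(2) and the trivial (3)$\Rightarrow$(1) are fine. But the heart of the argument --- the direction (2)$\Rightarrow$(1), and with it your (2)$\Rightarrow$(3) --- has a genuine gap.

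The gap is this. From $\mathbb{M}(\Delta_N)>1$ you must produce, for a \emph{prescribed} fibered class $b_1$ (namely the pullback of the original fibration by $S$), a finite cover of its monodromy with spectral radius $>1$. Your appeal to Boyd--Lawton continuity only says that \emph{generic} one-variable specializations of $\Delta_N$ have Mahler measure close to $\mathbb{M}(\Delta_N)$; it gives no control over the fixed specialization along $b_1$, and the classes realizing the limit need not be fibered, let alone equal to $b_1$. Likewise your phrase ``the multivariable Mahler measure dominates its specializations'' is false in general (specializations can have strictly larger Mahler measure), so it cannot carry the argument. This same gap reappears in your (2)$\Rightarrow$(3): the assertion ``$\mathbb{M}(\Delta_N)>1\Rightarrow\mathbb{M}(\Delta_{N''})>1$ for any finite cover $N''$'' is exactly the kind of statement the theorem is meant to establish; it is not a known monotonicity, and the divisibility/twisting remarks you make do not prove it (twisted factors have algebraic-integer, not rational-integer, coefficients, and you have not shown each has Mahler measure $\ge 1$).

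What the paper does instead is the missing idea: given the fixed fibered class $b_1$, write $\mathbb{M}(\Delta_N)$ as an iterated integral over $T^n$, integrate first in the $b_1$-direction, and pick a \emph{rational} point $(r_2,\dots,r_n)\in T^{n-1}$ where the inner integral is positive. That rational point determines a finite \emph{abelian} cover $\hat N\to N$, and the Dubois--Yamaguchi product formula for $\Delta_{\hat N}^{\hat\pi}$ together with the fiberedness of $b_1$ (which forces each twisted factor to be monic along $b_1$, hence to have Mahler measure $\ge 1$) yields $\mathbb{M}(\Delta_{\hat N}^{p^*(b_1)})>1$. This is exactly what is needed: the monodromy of the \emph{same} fibered class, upstairs, now has spectral radius $>1$. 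The passage to arbitrary $N'$ in (2)$\Rightarrow$(3) then goes through common covers using this proposition twice, together with the elementary fact (your Lemma-type statement) that spectral radius $>1$ persists under lifting the monodromy. Your outline would be correct if you replaced the Boyd--Lawton step by this abelian-cover construction.
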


The proof of Theorem \ref{main} uses Mahler measures of (twisted) Alexander polynomials of $3$-manifolds. Actually, it is shown in \cite{SW, Le} that the statements in Theorem \ref{main} are also equivalent to a statement on homological torsion growth of finite abelian covers of $3$-manifolds, see Theorem \ref{torsiongrowth}.

After the resolution of the Virtual Haken and Virtual Fibered Conjectures (\cite{Ag1}), more and more evidence suggests that the family of all finite covers of a single hyperbolic $3$-manifold form a large class of $3$-manifolds, and it may satisfy many prescribed properties. In particular, Agol asked the following question about the monodromy of virtual fibered structures in \cite{Ag2}.

\begin{que}\label{orientable}
Does a finite volume hyperbolic $3$-manifold $M$ admit a finite cover which fibers over $S^1$ with orientable foliation of the pseudo-Anosov map?
\end{que}

For a pseudo-Anosov map with orientable invariant foliation, it is well known that its spectral radius is equal to its dilatation, which is greater than $1$. An immediate consequence of Theorem \ref{main} is that, a positive answer of Question \ref{orientable} implies a positive answer of Question \ref{eigenvalue}.

\subsection*{Acknowledgement}
The author is grateful to Ian Agol for many insightful conversations. The author thanks Yi Liu and Thomas Koberda for valuable communications on a previous version of this paper, and also thanks the anonymous referee for helpful comments.

\section{Preliminaries}\label{preliminary}

In this section, we first review some basic material of surface automorphisms and their finite covers, then review the definition and some properties of twisted Alexander polynomials and the Mahler measure.

\subsection{Surface Automorphisms}

Let $S$ be a compact orientable surface with $\chi(S)<0$ and let $\phi:S\to S$ be an orientation preserving self-homeomorphism (surface automorphism), then Thurston showed the following Nielsen--Thurston trichotomy of $(S,\phi)$ (see \cite{Th}).

\begin{thm}\label{nielsenthurston}
For any surface automorphism $(S,\phi)$, $\phi$ is isotopic to exactly one of the following three standard forms.
\begin{itemize}
\item $\phi$ is periodic.
\item $\phi$ is reducible, i.e. there is a finite collection of disjoint essential simple closed curves $\mathcal{C}$ on $S$ such that $\phi(\mathcal{C})=\mathcal{C}$.
\item $\phi$ is pseudo-Anosov, i.e. there are two transverse singular measured foliations that are invariant under the action of $\phi$, such that $\phi$ stretches one of the singular measured foliation and contracts the other one.
\end{itemize}
\end{thm}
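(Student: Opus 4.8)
This statement is Thurston's classification of surface homeomorphisms, cited here from \cite{Th}; rather than attempt an improvement I would reproduce Thurston's Teichm\"uller-theoretic proof, in the streamlined form due to Bers and to Fathi--Laudenbach--Po\'enaru. The mapping class $\phi$ acts on the Teichm\"uller space $\mathcal{T}(S)$ --- a finite-dimensional open ball --- by isometries of the Teichm\"uller metric $d_{\mathcal{T}}$, and the plan is to study the translation length $\lambda(\phi)=\inf_{X\in\mathcal{T}(S)} d_{\mathcal{T}}(X,\phi(X))$. The argument then splits into cases according to whether this infimum equals $0$ or is positive, and whether it is attained.

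If $\lambda(\phi)=0$ and is attained at some $X_{0}\in\mathcal{T}(S)$, then $\phi$ fixes $X_{0}$, hence lies in the (finite) isometry group of the hyperbolic surface underlying $X_{0}$, so $\phi$ is periodic. If $\lambda(\phi)>0$ and is attained, then the set of minimizers is a $\phi$-invariant Teichm\"uller geodesic (the axis) along which $\phi$ translates by $\log\lambda(\phi)$; this geodesic is directed by a holomorphic quadratic differential $q$, and the singular flat structure $|q|$ together with its horizontal and vertical trajectory foliations realize $\phi$, up to isotopy, as an affine map with linear part conjugate to $\mathrm{diag}(\lambda(\phi),\lambda(\phi)^{-1})$, i.e. $\phi$ is pseudo-Anosov with dilatation $\lambda(\phi)$.

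The remaining cases --- the infimum not attained, whether zero or positive --- are where the reducible conclusion comes from and where the substance of the proof lies. I would take a minimizing sequence $X_{n}$ and pass, inside the Thurston compactification $\overline{\mathcal{T}(S)}=\mathcal{T}(S)\cup\mathcal{PMF}(S)$ (a closed ball on which $\mathrm{MCG}(S)$ acts by homeomorphisms), to a subsequential limit $[\mu]\in\mathcal{PMF}(S)$; an escaping-geodesic plus extremal-length estimate shows $\phi\cdot[\mu]=[\mu]$ and, crucially, that $\mu$ cannot be filling, for otherwise one proves $\lambda(\phi)>0$ and is attained, contradicting the standing assumption. A non-filling $\phi$-invariant measured foliation then yields a finite, essentially canonical $\phi$-invariant system of disjoint essential simple closed curves --- those of zero intersection number with $\mu$ --- so $\phi$ is reducible. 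For the ``exactly one'' part I would note that a pseudo-Anosov map has dilatation $>1$, its invariant foliations are filling with no closed leaves, and $i(\phi^{n}(c),\mu)$ grows like $\lambda(\phi)^{n}$ for any essential simple closed curve $c$, which precludes a periodic $c$ and forces infinite order; hence such a $\phi$ is neither periodic nor reducible, while a periodic map is visibly not pseudo-Anosov. (If a finite-order map with an invariant multicurve is also called reducible, then the trichotomy should be read with ``reducible'' meaning infinite order with an invariant multicurve, in which case the three classes are disjoint.)

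The main obstacle is precisely the construction and the properties of $\mathcal{PMF}(S)$: that $\mathcal{T}(S)$ compactifies to a closed ball, that the mapping class group action extends continuously to the boundary, and that the degeneration of the Teichm\"uller metric near $\mathcal{PMF}(S)$ converts a non-attained infimum into a non-filling invariant foliation --- all of which rests on the interplay between the Teichm\"uller metric, extremal length, and measured foliations (Hubbard--Masur and related results). An alternative I would keep in reserve is the Casson--Bleiler combinatorial proof: there one lets $\phi$ act on $\mathcal{PML}(S)$, again a ball, obtains a fixed projective lamination $[L]$ by Brouwer, and uses train-track carrying to decide whether $L$ is filling; in that route the crux is the Perron--Frobenius analysis of the transition matrices of the train tracks carrying $L$ and $\phi(L)$, which simultaneously produces the dilatation and the dual expanding/contracting foliations.
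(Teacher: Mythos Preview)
The paper does not prove this theorem at all: it is stated as background and attributed to Thurston via the citation \cite{Th}, with no argument given. Your outline of the Bers/Thurston Teichm\"uller-theoretic proof (with the Casson--Bleiler alternative) is a reasonable sketch of the standard approach, but there is nothing in the paper to compare it against, since the author treats the Nielsen--Thurston classification as a quoted result rather than something to be established here.
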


Among these three types of surface automorphisms, the pseudo-Anosov type is the most interesting one. Apparently, the periodic type is relatively simple. For a reducible map, the surface can be decomposed to subsurfaces along $\mathcal{C}$, such that $\phi$ acts on each $\phi$-component of $S\setminus \mathcal{C}$ as a periodic or pseudo-Anosov automorphism.

Now we consider lifting surface automorphisms to finite covers.

For a surface automorphism $\phi:S\to S$ and a finite cover $\pi:\tilde{S}\to S$, we say that $(S,\phi)$ lifts to $\tilde{S}$ if there is an automorphism $\tilde{\phi}:\tilde{S}\to \tilde{S}$ such that $\pi\circ\tilde{\phi}=\phi\circ\pi$, and we call $(\tilde{S},\tilde{\phi})$ a finite cover of $(S,\phi)$.

Given a surface automorphism $\phi:S\to S$ and a finite cover $\pi:\tilde{S}\to S$, a lift $\tilde{\phi}:\tilde{S}\to \tilde{S}$ may or may not exist. Actually, an exercise in covering space theory implies that, a lift $\tilde{\phi}$ on $\tilde{S}$ exists if and only if $\phi_*:\pi_1(S)\to \pi_1(S)$ fixes the conjugacy class of $\pi_*(\pi_1(\tilde{S}))<\pi_1(S)$.

Since $\pi_1(S)$ is finitely generated, it has only finitely many conjugacy classes of subgroups with index $[\pi_1(S):\pi_1(\tilde{S})]$. So there exists a positive integer $k$, such that $(S,\phi^k)$ admits a lift to $\tilde{S}$ (there is a $k$ bounded by the number of conjugacy classes that works). It is also known that if $\pi$ is a characteristic finite cover, i.e. $\pi_1(\tilde{S})$ is a finite index characteristic subgroup of $\pi_1(S)$, a lift always exists.

On the other hand, for an automorphism $\phi:S\to S$ and a finite cover $\pi:\tilde{S}\to S$, $(S,\phi)$ may have different finite covers $(\tilde{S},\tilde{\phi})$ and $(\tilde{S},\tilde{\phi}')$, and they differ by a deck transformation of $\pi:\tilde{S}\to S$. In this case, there exists a positive integer $n$ (bounded by the order of the deck transformation group), such that $(\tilde{\phi})^n=(\tilde{\phi}')^n$.

By the definition of periodic, reducible and pseudo-Anosov mapping classes, it is easy to see that taking a lift of a mapping class (when it exists) does not change the Nielsen--Thurston type.

The following lemma concerns the behavior of the spectral radius when taking finite covers of surface automorphisms. Although the proof is simple, since the spectral radius of pseudo-Anosov maps is the main object of this paper, we give a proof here.

\begin{lem}\label{simple}
Let $(S,\phi)$ be a surface automorphism with spectral radius greater than $1$, then for any finite cover $(\tilde{S},\tilde{\phi})$ of $(S,\phi)$, the spectral radius of $\tilde{\phi}$ is also greater than $1$.
\end{lem}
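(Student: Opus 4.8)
The key point is that the homology of the cover $\tilde S$ contains a copy of the homology of $S$ on which $\tilde\phi$ acts compatibly with $\phi$, via transfer. Concretely, let $\pi:\tilde S\to S$ be the finite covering with $\pi\circ\tilde\phi=\phi\circ\pi$, and consider the maps $\pi_*:H_1(\tilde S;\mathbb{C})\to H_1(S;\mathbb{C})$ and the transfer $\pi^!:H_1(S;\mathbb{C})\to H_1(\tilde S;\mathbb{C})$ (lifting a $1$-cycle to the sum of its preimages). The standard relation $\pi_*\circ\pi^! = d\cdot\mathrm{id}$, where $d$ is the degree of the cover, shows that $\pi^!$ is injective, so $V:=\pi^!(H_1(S;\mathbb{C}))$ is a subspace of $H_1(\tilde S;\mathbb{C})$ isomorphic to $H_1(S;\mathbb{C})$.

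Next I would check that $V$ is $\tilde\phi_*$-invariant and that $\tilde\phi_*|_V$ is conjugate to $\phi_*$. This is where one must be slightly careful: naturality of transfer requires the covering square to commute, which it does by the hypothesis $\pi\circ\tilde\phi=\phi\circ\pi$. Given that, one has $\tilde\phi_*\circ\pi^! = \pi^!\circ\phi_*$ as maps $H_1(S;\mathbb{C})\to H_1(\tilde S;\mathbb{C})$: lifting a cycle to the full preimage and then pushing forward by $\tilde\phi$ yields the same chain as pushing forward by $\phi$ downstairs and then lifting, because $\tilde\phi$ permutes the sheets over each point of $S$ in a way compatible with $\phi$. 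Hence $\tilde\phi_*(V)\subseteq V$, and under the isomorphism $\pi^!:H_1(S;\mathbb{C})\xrightarrow{\ \sim\ }V$ the restriction $\tilde\phi_*|_V$ is intertwined with $\phi_*$. Therefore the characteristic polynomial of $\phi_*$ divides that of $\tilde\phi_*$, so every eigenvalue of $\phi_*$ is an eigenvalue of $\tilde\phi_*$.

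Finally, since by hypothesis the spectral radius of $\phi_*$ exceeds $1$, there is an eigenvalue $\lambda$ of $\phi_*$ with $|\lambda|>1$; by the previous paragraph $\lambda$ is also an eigenvalue of $\tilde\phi_*$, so the spectral radius of $\tilde\phi$ is at least $|\lambda|>1$, as desired. The only genuine obstacle is verifying the intertwining identity $\tilde\phi_*\circ\pi^! = \pi^!\circ\phi_*$ cleanly; it is most transparent on the chain level, where the transfer of a singular simplex $\sigma$ in $S$ is the sum of its $d$ lifts, and one observes that $\tilde\phi$ carries the set of lifts of $\sigma$ bijectively onto the set of lifts of $\phi\circ\sigma$, because $\tilde\phi$ covers $\phi$. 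Everything else is formal.
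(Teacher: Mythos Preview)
Your proof is correct and follows essentially the same idea as the paper's: both arguments exhibit $\phi_*$ as a block of $\tilde\phi_*$ by exploiting the covering relation $\pi\circ\tilde\phi=\phi\circ\pi$. The only difference is a minor duality: the paper uses that $K=\ker\pi_*$ is $\tilde\phi_*$-invariant, so $\tilde\phi_*$ is block upper-triangular with $\phi_*$ appearing as the induced map on the quotient $H_1(\tilde S;\mathbb{C})/K\cong H_1(S;\mathbb{C})$, whereas you use the transfer $\pi^!$ to realize $H_1(S;\mathbb{C})$ as an invariant \emph{subspace} on which $\tilde\phi_*$ restricts to $\phi_*$. Either way the characteristic polynomial of $\phi_*$ divides that of $\tilde\phi_*$, and the conclusion follows; your verification of $\tilde\phi_*\circ\pi^!=\pi^!\circ\phi_*$ at the chain level is clean and correct.
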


\begin{proof}
Let $\pi:\tilde{S}\to S$ be the finite cover, and $K$ be the kernel of $\pi_*:H_1(\tilde{S};\mathbb{C})\to H_1(S;\mathbb{C})$. Since $\pi_*$ is surjective, we can identify $H_1(S;\mathbb{C})$ as a subspace of $H_1(\tilde{S};\mathbb{C})$, and get a decomposition $H_1(\tilde{S};\mathbb{C})=K\oplus H_1(S;\mathbb{C})$.

Since $(\tilde{S},\tilde{\phi})$ is a finite cover of $(S,\phi)$, $\pi_*\circ \tilde{\phi}_*=\phi_*\circ \pi_*$ holds. This implies that $K$ is a $\tilde{\phi}_*$-invariant subspace of $H_1(\tilde{S};\mathbb{C})$. Let $A$ be a squared matrix representing $(\tilde{\phi}_*)|_K:K\to K$, and let $C$ be a squared matrix representing $\phi_*:H_1(S;\mathbb{C})\to H_1(S;\mathbb{C})$. Then the lifted homological action $\tilde{\phi}_*:H_1(\tilde{S};\mathbb{C})\to H_1(\tilde{S};\mathbb{C})$ is represented by a matrix in the form of
$\begin{pmatrix}
A & B\\
0 & C
\end{pmatrix}$.

Since the spectral radius of $\phi$ is greater than $1$, the spectral radius of $C$ is greater than $1$. By simple linear algebra, the spectral radius of $\begin{pmatrix}
A & B\\
0 & C
\end{pmatrix}$, which equals the spectral radius of $\tilde{\phi}$, is also greater than $1$.
\end{proof}

\subsection{Twisted Alexander Polynomials of $3$-manifolds}
We will only state the definition of twisted Alexander polynomials in sufficient generality as to be convenient for our applications.

We will use $R$ to denote a domain. In this paper, we only consider the case that $R=\mathbb{C}$ or $\mathcal{O}_K$ (the ring of algebraic integers of a number field $K$).

For a compact manifold $M$ (or a finite CW-complex), let $\gamma:\pi_1(M)\to GL(n;R)$ be a group representation, and let $\psi:\pi_1(M)\to F$ be a homomorphism to a free abelian group $F$. Then $\gamma$ and $\psi$ induce a representation $$\alpha=\gamma\otimes \psi:\pi_1(M)\to GL(n;R)\otimes \mathbb{Z}[F]=GL(n;R[F]),$$ and it gives a $\pi_1(M)$-action on $R[F]^n$. In this paper, we will only use the $n=1$ case.

Let $\tilde{M}$ be the universal cover of $M$, then the (first) twisted Alexander module of $(M,\alpha)$ is the finitely generated $R[F]$-module $$H_1(\tilde{M};R[F]^n)=H_1(C_*(\tilde{M})\otimes_{\mathbb{Z}[\pi_1(M)]}R[F]^n).$$

For a finitely generated $R[F]$-module $A$, let $P$ be a presentation matrix of $A$ with size $r\times s$ and entries in $R[F]$. Then the order of $P$ is defined to be the greatest common divisor of the determinants of all $s\times s$ submatrices of $P$. The order is an element in $R[F]$ and well-defined up to multiplying a unit of $R[F]$. Then we define the twisted Alexander polynomial of $(M,\alpha)$ to be the order of $H_1(\tilde{M};R[F]^n)$, and we denote it by $\Delta_M^{\alpha}$.

When $\gamma:\pi_1(M)\to GL(1;\mathbb{Z})=\{\pm 1\}$ is the trivial representation, we get an ordinary Alexander polynomial of $M$ and denote it by $\Delta_M^{\psi}=\Delta_M^{1\otimes \psi}$. Then $\Delta_M^{\psi}$ is an element in $\mathbb{Z}[F]$. When $\gamma$ is trivial and $\psi=\pi:\pi_1(M)\to H_1(M;\mathbb{Z})/\text{Tor}$ is the standard homomorphism to the free part of $H_1(M;\mathbb{Z})$, we get the standard multivariable Alexander polynomial of $M$, which is denoted by $\Delta_M^{\pi}$, or simply $\Delta_M$. When $\gamma$ is trivial and $\psi$ corresponds to a cohomology class $a\in H^1(M;\mathbb{Z})$, i.e. $\psi$ equals the composition $\pi_1(M)\xrightarrow{\pi} H_1(M;\mathbb{Z})/\text{Tor}\xrightarrow{a}\mathbb{Z}$, we get the (single variable) Alexander polynomial of $M$ with respect to $a\in H^1(M;\mathbb{Z})$, which is denoted by $\Delta_M^a$.

Let $\xi:H_1(M;\mathbb{Z})/\text{Tor}\to S^1\subset GL(1;\mathbb{C})$ be a homomorphism, and let $\psi:\pi_1(M)\to F$ be a surjective homomorphism to a free abelian group $F$, then $\Delta_M^{\xi\otimes \psi}$ is a polynomial in $\mathbb{C}[F]$. For any surjective homomorphism $a:F \to \mathbb{Z}$, there is a relation between $a(\Delta_M^{\xi\otimes \psi})$ and $\Delta_M^{\xi\otimes (a\circ\psi)}$. The following lemma follows from Proposition 2, Proposition 3 and Proposition 5 of \cite{FV} and a simple computation.

\begin{prop}\label{restriction}
Let $M$ be a compact $3$-manifold with empty or tori boundary, and suppose that the rank of $F$ is at least $2$. Let the group ring $\mathbb{Z}[\mathbb{Z}]$ be identified with $\mathbb{Z}[t^{\pm 1}]$. Then we have $$\Delta_M^{\xi\otimes (a\circ\psi)}=a(\Delta_M^{\xi\otimes \psi})\cdot p(t).$$ Here $p(t)=1$ if the restriction of $\xi$ on $\ker{(a\circ\psi)}$ is nontrivial; $p(t)=t-\bar{\xi}(a^*)$ if the restriction of $\xi$ on $\ker{(a\circ\psi)}$ is trivial and $M$ is not a closed manifold, where $a^*\in H_1(M;\mathbb{Z})/\text{Tor}$ is an element such that $a\circ\psi(a^*)=1$; and $p(t)=(t-\xi(a^*))(t-\bar{\xi}(a^*))$ if the restriction of $\xi$ on $\ker{(a\circ\psi)}$ is trivial and $M$ is a closed manifold.

In particular, for the Mahler measure, $\mathbb{M}(\Delta_M^{\xi\otimes (a\circ\psi)})=\mathbb{M}(a(\Delta_M^{\xi\otimes \psi}))$ holds.
\end{prop}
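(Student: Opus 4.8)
The plan is to compare the two twisted Alexander modules at the chain level and then to play off the behaviour of Reidemeister torsion under the coefficient change induced by $a$; the three cited propositions of \cite{FV} supply exactly the pieces one needs, and the remaining work is a direct computation of two ``corner'' polynomials for the rank-one homomorphism $a\circ\psi$.

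First I would fix a finite CW-structure on $M$ and write $C_{*}=C_*(\widetilde M)\otimes_{\mathbb{Z}[\pi_1M]}\mathbb{C}[F]$ for the twisted cellular chain complex attached to $\xi\otimes\psi$; it is a complex of free $\mathbb{C}[F]$-modules, and the ring homomorphism $a\colon\mathbb{C}[F]\to\mathbb{C}[t^{\pm1}]$ induced by $a\colon F\to\mathbb{Z}$ identifies $C_{*}\otimes_{\mathbb{C}[F]}\mathbb{C}[t^{\pm1}]$ with the complex computing $H_*(\widetilde M;\mathbb{C}[t^{\pm1}])$ for $\xi\otimes(a\circ\psi)$. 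So $\Delta_M^{\xi\otimes\psi}=\operatorname{ord}_{\mathbb{C}[F]}H_1(C_{*})$ while $\Delta_M^{\xi\otimes(a\circ\psi)}=\operatorname{ord}_{\mathbb{C}[t^{\pm1}]}H_1(C_{*}\otimes_a\mathbb{C}[t^{\pm1}])$. The three cited propositions of \cite{FV} supply, in turn, the identity $\tau=\prod_i\operatorname{ord}(H_i)^{(-1)^{i+1}}$ for the Reidemeister torsion of a $3$-manifold, the triviality (up to units) of the corner polynomials $\Delta^{\xi\otimes\psi}_{M,0}$, $\Delta^{\xi\otimes\psi}_{M,2}$, $\Delta^{\xi\otimes\psi}_{M,3}$ when $\operatorname{rank}F\ge2$, and the naturality of the torsion under $a$, i.e. $\tau(C_{*}\otimes_a\mathbb{C}[t^{\pm1}])\doteq a(\tau(C_{*}))$. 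Combining these (using also that $H_3$ of an infinite cover of a $3$-manifold vanishes) yields
\[
\Delta_M^{\xi\otimes(a\circ\psi)}\;\doteq\;a\!\left(\Delta_M^{\xi\otimes\psi}\right)\cdot\Delta^{\xi\otimes(a\circ\psi)}_{M,0}\cdot\Delta^{\xi\otimes(a\circ\psi)}_{M,2},
\]
so $p(t)=\Delta^{\xi\otimes(a\circ\psi)}_{M,0}\cdot\Delta^{\xi\otimes(a\circ\psi)}_{M,2}$, and it remains to compute these two factors.

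This is the ``simple computation''. One has $H_0(C_{*}\otimes_a\mathbb{C}[t^{\pm1}])=\mathbb{C}[t^{\pm1}]\big/\langle\,\xi(\bar g)\,t^{a\psi(g)}-1:g\in\pi_1M\,\rangle$. If $\xi|_{\ker(a\circ\psi)}$ is nontrivial, this ideal contains a unit, so $H_0=0$ and $\Delta^{\xi\otimes(a\circ\psi)}_{M,0}\doteq1$; if $\xi|_{\ker(a\circ\psi)}$ is trivial, then $\xi(\bar g)$ depends only on $a\psi(g)$ and equals $\xi(a^{*})^{a\psi(g)}$, the ideal is $(\xi(a^{*})t-1)$, and $\Delta^{\xi\otimes(a\circ\psi)}_{M,0}\doteq t-\bar\xi(a^{*})$. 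For $H_2$: when $\partial M\ne\emptyset$, the infinite cyclic cover is an open $3$-manifold and (again by \cite{FV}) the torsion submodule of its $H_2$ with these coefficients is trivial, so $\Delta^{\xi\otimes(a\circ\psi)}_{M,2}\doteq1$; when $M$ is closed, Poincar\'e--Milnor duality on the infinite cyclic cover expresses $\Delta^{\xi\otimes(a\circ\psi)}_{M,2}$ through an $H_0$-computation of the same type, contributing the second linear factor (and $\doteq1$ when $\xi|_{\ker(a\circ\psi)}$ is nontrivial). Assembling the cases gives precisely the $p(t)$ in the statement. Finally, for the ``in particular'': since $\xi$ takes values in $S^1$, each linear factor of $p(t)$ is of the form $t-\zeta$ with $|\zeta|=1$ and hence has Mahler measure $1$; by multiplicativity of the Mahler measure, $\mathbb{M}(\Delta_M^{\xi\otimes(a\circ\psi)})=\mathbb{M}(a(\Delta_M^{\xi\otimes\psi}))$.

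The step I expect to be the main obstacle is making the naturality $\tau(C_{*}\otimes_a\mathbb{C}[t^{\pm1}])\doteq a(\tau(C_{*}))$ precise despite the chain complexes not being acyclic, so that genuinely no factors beyond $\Delta_{M,0}$ and $\Delta_{M,2}$ creep in; this is exactly the input one imports from \cite{FV}, via the torsion--order formula together with the Wang and duality sequences for $3$-manifolds, and one must also keep careful track of the involution in the closed-manifold duality.
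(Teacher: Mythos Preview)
Your proposal is correct and is precisely the approach the paper intends: the paper gives no detailed argument at all, merely stating that the result ``follows from Proposition 2, Proposition 3 and Proposition 5 of \cite{FV} and a simple computation,'' and your sketch unpacks exactly that---the torsion/order identity, the triviality of the degree-$0$ and degree-$2$ polynomials when $\operatorname{rank}F\ge 2$, naturality of torsion under $a$, and then the explicit $H_0$ and $H_2$ computations over $\mathbb{C}[t^{\pm1}]$ yielding $p(t)$. The caution you flag about making the base-change of torsion precise in the non-acyclic setting is appropriate, but this is indeed what the cited propositions in \cite{FV} provide.
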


We will define the Mahler measure of (multivariable) polynomials in the next subsection.

Let $p:\hat{M}\to M$ be a finite abelian cover with abelian deck transformation group $G$, then $\pi:\pi_1(M)\to H_1(M;\mathbb{Z})/\text{Tor}$ induces $$\hat{\pi}:\pi_1(\hat{M})\xrightarrow{p_*} \pi_1(M)\xrightarrow{\pi} H_1(M;\mathbb{Z})/\text{Tor}.$$ Then we can compute the Alexander polynomial $\Delta_{\hat{M}}^{\hat{\pi}}$ by twisted Alexander polynomials of $M$.

Let $\hat{G}$ be the set of all homomorphisms from $G$ to $S^1\subset GL(1;\mathbb{C})$, then it a finite set. For any $\xi\in \hat{G}$, we still use $\xi$ to denote the composition $$\pi_1(M)\xrightarrow{\pi} H_1(M;\mathbb{Z})/\text{Tor}\to G\xrightarrow{\xi} S^1\subset GL(1;\mathbb{C}).$$ Then we get a representation $$\xi\otimes\pi:\pi_1(M)\to GL(1;\mathbb{C}[H_1(M;\mathbb{Z})/\text{Tor}]).$$ Actually, the image of $\xi\otimes \pi$ lies in $GL(1;\mathcal{O}_K[H_1(M;\mathbb{Z})/\text{Tor}])$ for some cyclotomic field $K$ depending on $G$. Then the following formula is proved in \cite{DY}.

\begin{thm}\label{abeliancover}
Let $(t_1,t_2,\cdots,t_n)$ be a $\mathbb{Z}$-basis of $H_1(M;\mathbb{Z})/Tor$, then $$\Delta_{\hat{M}}^{\hat{\pi}}(t_1,\cdots,t_n)=\pm \Pi_{\xi \in \hat{G}}\Delta_M^{\xi\otimes \pi}(t_1,\cdots,t_n)=\pm \Pi_{\xi \in \hat{G}}\Delta_M^{\pi}(\bar{\xi}(t_1)t_1,\cdots,\bar{\xi}(t_n)t_n).$$ Here $\bar{\xi}$ denotes the complex conjugation of $\xi$.
\end{thm}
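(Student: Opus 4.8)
The plan is to decompose, using Shapiro's lemma together with the Fourier decomposition of the group ring $\mathbb{C}[G]$, the twisted homology module of $\hat M$ whose order is $\Delta^{\hat\pi}_{\hat M}$ into the direct sum, over $\xi\in\hat G$, of the twisted homology modules of $M$ whose orders are the $\Delta^{\xi\otimes\pi}_M$. Write $A=H_1(M;\mathbb{Z})/\mathrm{Tor}$. As the statement presupposes, $G$ is a quotient of $A$, say $G=A/K$, so that $\pi_1(\hat M)=\pi^{-1}(K)\triangleleft\pi_1(M)$ and $\hat\pi$ is the restriction of $\pi$ to $\pi_1(\hat M)$, with image $K$. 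Let $C_*=C_*(\widetilde M)$ be the cellular chain complex of the universal cover, a finite complex of finitely generated free $\mathbb{Z}[\pi_1(M)]$-modules, hence also free over $\mathbb{Z}[\pi_1(\hat M)]$. By definition $\Delta^{\hat\pi}_{\hat M}$ is the $\mathbb{Z}[A]$-order of $H_1\big(C_*\otimes_{\mathbb{Z}[\pi_1(\hat M)]}\mathbb{Z}[A]\big)$, the coefficients being acted on through $\hat\pi$, and $\Delta^{\xi\otimes\pi}_M$ is the $\mathbb{C}[A]$-order of $H_1\big(C_*\otimes_{\mathbb{Z}[\pi_1(M)]}\mathbb{C}[A]\big)$, the coefficients being acted on through $\xi\otimes\pi$; to compare the two I work over $\mathbb{C}[A]$ throughout.

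First I would apply Shapiro's lemma (adjunction along $\mathbb{Z}[\pi_1(\hat M)]\hookrightarrow\mathbb{Z}[\pi_1(M)]$) to identify $C_*\otimes_{\mathbb{Z}[\pi_1(\hat M)]}\mathbb{C}[A]$ with $C_*\otimes_{\mathbb{Z}[\pi_1(M)]}\mathrm{Ind}_{\pi_1(\hat M)}^{\pi_1(M)}\mathbb{C}[A]$, as complexes of $\mathbb{C}[\pi_1(M)]$-$\mathbb{C}[A]$-bimodules. The heart of the argument is a projection-formula identification
\[
\mathrm{Ind}_{\pi_1(\hat M)}^{\pi_1(M)}\mathbb{C}[A]\;\cong\;\mathbb{C}[G]\otimes_{\mathbb{C}}\mathbb{C}[A],
\]
where $\mathbb{C}[A]$ on the left carries the $\pi_1(\hat M)$-action through $\hat\pi$ and $\pi_1(M)$ acts on the right through $g\mapsto\bar g\otimes t^{\pi(g)}$ ($\bar g\in G$ acting by left translation, $t^{\pi(g)}$ by multiplication); this uses exactly that $\hat\pi$ is the restriction to $\pi_1(\hat M)$ of the homomorphism $\pi$ on all of $\pi_1(M)$ and that $\pi_1(\hat M)$ is normal with quotient $G$, and one verifies it by choosing coset representatives $g_1,\dots,g_{|G|}$ and rescaling the $i$-th induced summand by $t^{\pi(g_i)}$ so as to linearize the a priori twisted action. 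Decomposing $\mathbb{C}[G]=\bigoplus_{\xi\in\hat G}\mathbb{C}_\xi$ into its character lines then gives $\mathrm{Ind}_{\pi_1(\hat M)}^{\pi_1(M)}\mathbb{C}[A]\cong\bigoplus_{\xi\in\hat G}\mathbb{C}[A]_{\xi\otimes\pi}$ as bimodules. Tensoring over $\mathbb{Z}[\pi_1(M)]$ with $C_*$, passing to $H_1$, and using that the order ideal of a finite direct sum is the product of the orders of the summands yields the first equality up to a unit of $\mathbb{C}[A]$. For the second equality, since $\xi$ factors through $\pi$ there is a character $\chi\colon A\to S^1$ with $\xi=\chi\circ\pi$, and $t^a\mapsto\bar\chi(a)t^a$ is a $\pi_1(M)$-equivariant $\mathbb{C}$-algebra automorphism of $\mathbb{C}[A]$ from the $\xi\otimes\pi$-twisted coefficient module to the $\pi$-twisted one; it therefore carries $\Delta^\pi_M$ to $\Delta^{\xi\otimes\pi}_M$, which on the chosen basis $(t_1,\dots,t_n)$ is exactly the substitution $t_i\mapsto\bar\xi(t_i)t_i$ --- the ``simple computation''.

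The step I expect to be the real obstacle is upgrading ``up to a unit of $\mathbb{C}[A]$'' to the sharp ``$\pm$'' in the statement: order ideals of twisted Alexander modules are defined only up to units, and units of $\mathbb{C}[A]$ include all nonzero scalar multiples of monomials, so the Fourier decomposition above a priori loses a scalar and the sign. To recover them one must normalize each $\Delta^{\xi\otimes\pi}_M$ in the standard way (as in \cite{FV}), and then argue that the resulting product is an integral Laurent polynomial agreeing with $\Delta^{\hat\pi}_{\hat M}$ up to sign --- for instance by comparing leading coefficients, or by tracking the scalar discrepancy through the Shapiro isomorphism using that $\Delta^{\hat\pi}_{\hat M}$ is by construction the order of a genuine $\mathbb{Z}[A]$-module. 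A secondary, purely bookkeeping point is to make sure the isomorphisms above are simultaneously left $\mathbb{C}[\pi_1(M)]$-linear and right $\mathbb{C}[A]$-linear, so that the order ideals really transform as claimed, and to keep the $\xi$-versus-$\bar\xi$ conventions consistent.
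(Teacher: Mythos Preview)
The paper does not prove this theorem; it simply quotes it from \cite{DY}. So there is no in-paper argument to compare your proposal against.

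That said, your outline is the standard one and is essentially what \cite{DY} does: Shapiro's lemma to replace the $\pi_1(\hat M)$-coefficient module by an induced $\pi_1(M)$-module, the Fourier decomposition $\mathbb{C}[G]\cong\bigoplus_{\xi\in\hat G}\mathbb{C}_\xi$ to split the induced module into character lines, and multiplicativity of orders under direct sums for the first equality. The algebra automorphism $t^a\mapsto\bar\chi(a)t^a$ of $\mathbb{C}[A]$ is exactly the right device for the second equality.

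Your concern about the unit ambiguity is legitimate and your proposed fix is on the right track. One clean way to pin down the scalar: work over $\mathcal{O}_K[A]$ for the cyclotomic field $K$ generated by the values of all $\xi$ (the paper observes, just before the statement, that the representations land in $GL(1;\mathcal{O}_K[A])$). If one chooses the factors $\Delta_M^{\xi\otimes\pi}$ Galois-equivariantly---for instance by substituting $t_i\mapsto\bar\xi(t_i)t_i$ into a single fixed integral presentation of $\Delta_M^\pi$---then the product over $\hat G$ is Galois-invariant and hence lies in $\mathbb{Z}[A]$; the left-hand side is already in $\mathbb{Z}[A]$. The unit of $\mathcal{O}_K[A]$ relating them is then a monomial times an element of $\mathbb{Q}\cap\mathcal{O}_K^\times=\{\pm1\}$. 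For every application in the present paper only Mahler measures are used, so equality up to a monomial unit in $\mathbb{C}[A]$ would already suffice.
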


\subsection{The Mahler Measure}

For a polynomial $P(z)=a_0z^n+a_1z^{n-1}+\cdots+a_n\in \mathbb{C}[z]$ with $a_0\ne 0$, let $w_1,w_2,\cdots,w_n$ be the roots of $P(z)$ (with multiplicity). The Mahler Measure of $P(z)$ is defined to be $$\mathbb{M}(P)=|a_0|\cdot\Pi_{k=1}^n\max\{1,|w_k|\}.$$ By the Jensen's formula in complex analysis, the Mahler measure is equal to $$\mathbb{M}(P)=\exp\Big(\int_{S^1}\log{|P(z)|}\ \text{d}s \Big),$$ with $z=e^{2\pi is}$. We can take this integration formula as an alternative definition of the Mahler measure $\mathbb{M}(P)$.

The integration formula also gives the definition of Mahler measure for multivariable polynomials. For a polynomial $P({\bf z})=P(z_1,z_2,\cdots,z_m)\in\mathbb{C}[z_1,\cdots,z_m]$, its Mahler measure is defined by $$\mathbb{M}(P)=\exp\Big(\int_{T^m}\log{|P({\bf z})|}\ \text{d}{\bf s} \Big).$$ Here ${\bf z}=(z_1,z_2,\cdots,z_m)=(e^{2\pi is_1},e^{2\pi is_2},\cdots,e^{2\pi is_m})$.

For a multivariable polynomial with integer coefficients $P\in \mathbb{Z}[z_1,\cdots,z_m]$, $\mathbb{M}(P)\geq 1$ always holds. Moreover, it is also known exactly when $\mathbb{M}(P)=1$ holds (\cite{Bo}). Let $\Psi_n(z)$ be the $n$-th cyclotomic polynomial, an extended cyclotomic polynomial associated to $\Psi_n(z)$ is defined by the following data. For any positive integer $m$, and $m$ integers $v_1,v_2,\cdots,v_m$, let $b_k=\max\{0,-v_k\cdot\text{deg}(\Psi_n)\}$ for $k=1,2,\cdots m$, then $$z_1^{b_1}\dots z_m^{b_m}\Psi_n(z_1^{v_1}\dots z_m^{v_m})$$ is called an extended cyclotomic polynomial (with $m$ variables). Here we need the integers $b_k$ to make sure that each term of the extended cyclotomic polynomial has nonnegative power.

The following theorem characterizing $\mathbb{M}(P)=1$ is proved in \cite{Bo}.

\begin{thm}\label{cyclotomic}
For $P\in \mathbb{Z}[z_1,\cdots,z_m]$, $\mathbb{M}(P)=1$ if and only if $P$ is a product of monomials and extended cyclotomic polynomials.
\end{thm}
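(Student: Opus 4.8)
The two implications are proved separately; the ``if'' direction is elementary, while for the converse the real content is the multivariable version of Kronecker's theorem.

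\emph{The ``if'' direction.} Since $\log|PQ|=\log|P|+\log|Q|$ almost everywhere on $T^m$, the integral formula shows $\mathbb{M}$ is multiplicative, so it suffices to check that every monomial and every extended cyclotomic polynomial has Mahler measure $1$. A monomial has modulus $1$ everywhere on $T^m$. For $z_1^{b_1}\cdots z_m^{b_m}\Psi_n(z_1^{v_1}\cdots z_m^{v_m})$ with $(v_1,\dots,v_m)\neq 0$, the map $\rho\colon T^m\to S^1$, $(z_1,\dots,z_m)\mapsto z_1^{v_1}\cdots z_m^{v_m}$, is a surjective continuous homomorphism of compact groups, hence pushes normalized Haar measure to normalized Haar measure; therefore $\int_{T^m}\log|\Psi_n(z_1^{v_1}\cdots z_m^{v_m})|\,d\mathbf{s}=\int_{S^1}\log|\Psi_n(w)|\,dt=\log\mathbb{M}(\Psi_n)=0$, since $\Psi_n$ is monic with all roots roots of unity. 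Multiplicativity then gives $\mathbb{M}(P)=1$ for any product of monomials and extended cyclotomic polynomials.

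\emph{Reduction of the converse.} First I would record that $\mathbb{M}(Q)\geq 1$ for every nonzero $Q\in\mathbb{Z}[z_1^{\pm1},\dots,z_m^{\pm1}]$: by Fubini, $\log\mathbb{M}(Q)=\int_{T^{m-1}}m_{z_m}(Q)\,d\mathbf{s}'$, where $m_{z_m}(Q)(z_1,\dots,z_{m-1})$ is the logarithm of the one-variable Mahler measure of $Q$ in $z_m$; Jensen's formula gives $m_{z_m}(Q)\geq\log|c|$ with $c=c(z_1,\dots,z_{m-1})$ the leading coefficient of $Q$ in $z_m$, so $\log\mathbb{M}(Q)\geq\log\mathbb{M}(c)$, and one finishes by induction on $m$, the base case being $\mathbb{M}(Q)\geq|a_0|\geq 1$ for $0\neq Q\in\mathbb{Z}[z]$. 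Now write $P=c\cdot P_1\cdots P_k$ with $c\in\mathbb{Z}$ the content and the $P_i$ irreducible primitive; multiplicativity together with this inequality forces $|c|=1$ and $\mathbb{M}(P_i)=1$ for all $i$. A monomial among the $P_i$ is already of the required shape, so it remains to show: an irreducible primitive $P$ which is not a monomial and has $\mathbb{M}(P)=1$ is, up to sign, a monomial times $\Psi_n(z_1^{v_1}\cdots z_m^{v_m})$ for some $n\geq 1$ and some $(v_1,\dots,v_m)$.

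\emph{The core, by induction on $m$.} The case $m=1$ is Kronecker's theorem: $\mathbb{M}(P)=|a_0|\prod\max\{1,|w_k|\}=1$ forces $|a_0|=1$ and all roots in the closed unit disc, so (as $a_0=\pm1$) the $w_k$ are nonzero algebraic integers whose complete Galois orbit lies in the closed unit disc; since there are only finitely many algebraic integers of bounded degree with that property, the powers $\alpha,\alpha^2,\alpha^3,\dots$ repeat, giving $\alpha^a=\alpha^b$ with $a<b$, hence $\alpha$ a root of unity, so $P=\pm\Psi_n$. For $m\geq 2$, let $L\leq\mathbb{Z}^m$ be generated by the differences of the exponents occurring in $P$. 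If $\text{rank}\,L<m$, a unimodular monomial substitution together with multiplication by a monomial — operations preserving irreducibility, primitivity and $\mathbb{M}$ — makes $P$ a polynomial in fewer variables, and the inductive hypothesis applies. So the whole difficulty is the case $\text{rank}\,L=m$, i.e.\ the support of $P$ affinely spans $\mathbb{Z}^m$.

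\emph{The main obstacle: ruling out full-rank support.} Here is how I would begin, and where I expect to get stuck. View $P$ as a polynomial in $z_m$ over $\mathbb{Z}[z_1^{\pm1},\dots,z_{m-1}^{\pm1}]$, with leading and trailing coefficients $e,f$ (both nonzero, and the $z_m$-degree is positive since the support has full rank). The inequality from the reduction step gives $\mathbb{M}(e)\leq\mathbb{M}(P)=1$, hence $\mathbb{M}(e)=1$, and likewise $\mathbb{M}(f)=1$. Feeding this into Jensen's formula in $z_m$ shows $\int_{T^{m-1}}\sum_k\log^+|w_k(\mathbf{z}')|\,d\mathbf{s}'=0$ (where $\log^+x=\max\{\log x,0\}$), and a symmetric computation using $\prod_k w_k(\mathbf{z}')=\pm f(\mathbf{z}')/e(\mathbf{z}')$ gives $\int_{T^{m-1}}\sum_k\log^+|w_k(\mathbf{z}')^{-1}|\,d\mathbf{s}'=0$; as both integrands are nonnegative, for a.e.\ $\mathbf{z}'\in T^{m-1}$ every root $w_k(\mathbf{z}')$ of $P(\mathbf{z}',z_m)$ in $z_m$ lies on $S^1$, and by symmetry the same holds in every variable. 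Hence $\{P=0\}\cap T^m$ — which equals $\{P=0\}\cap\{\widetilde{P}=0\}\cap T^m$, where $\widetilde{P}(\mathbf{z})=P(z_1^{-1},\dots,z_m^{-1})$, because $\overline{P}=\widetilde{P}$ on $T^m$ — has real dimension at least $m-1$, anomalously large for the real locus of a complex hypersurface. Converting this into a contradiction with $\text{rank}\,L=m$ — roughly: such an oversized real zero set forces $P$ and $\widetilde{P}$ to share a factor, hence (by irreducibility) $P$ is reciprocal up to a monomial, and then an induction driven by the limit formula expressing $\mathbb{M}(P)$ as a limit of one-variable Mahler measures of the specializations $P(t^{r_1},\dots,t^{r_m})$ forces $P$ to be a monomial — is the substance of \cite{Bo} and the step I expect to be by far the hardest. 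Granting it, $P$ has the stated form and the induction on $m$ closes.
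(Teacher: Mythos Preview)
The paper does not prove this theorem: it is quoted as a result of Boyd \cite{Bo}, and no argument is given beyond the citation. So there is no ``paper's own proof'' to compare against; you are attempting to supply a proof for a statement that the paper merely imports.

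Your sketch of the ``if'' direction and of the reduction (multiplicativity, $\mathbb{M}\geq 1$ for integer polynomials, passage to irreducible primitive factors, the $m=1$ Kronecker case, and the unimodular change of variables when the exponent lattice has rank $<m$) is sound. However, the crux of the theorem is precisely the full-rank case, and there you explicitly defer to \cite{Bo}: you set up the conclusion that almost every specialization has all roots on the unit circle, observe this should force the zero set to meet $T^m$ in something too large, and then write that ``converting this into a contradiction \ldots\ is the substance of \cite{Bo} and the step I expect to be by far the hardest.'' That is an honest assessment, but it means your proposal is not a proof of the theorem --- it is an outline that stops at the same point where the paper stops, namely a citation of Boyd. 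In particular, the passage from ``all roots on $S^1$ for a.e.\ torus specialization'' to ``$P$ is (up to a monomial) a cyclotomic in a single monomial'' requires the genuine number-theoretic input (Boyd's use of Smyth's/Lawton's limit formula and the structure of integer polynomials with bounded Mahler measure), which you have not supplied.
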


For a compact manifold $M$ (or a finite CW-complex), we fix a surjective homomorphism $\psi:\pi_1(M)\to \mathbb{Z}^n$. For any finite index subgroup $\Gamma<\mathbb{Z}^n$, let $M_{\Gamma}$ be the finite cover of $M$ corresponding to the subgroup $\psi^{-1}(\Gamma)<\pi_1(M)$. Then the Mahler measure of $\Delta_M^{\psi}$ is closely related with the homological torsion growth of $M_{\Gamma}$, when the minimal distance from nonzero elements in $\Gamma\subset \mathbb{Z}^n$ to zero goes to infinity (under any matric).

A special version of the following theorem is proved in \cite{SW}, and the general form is proved in \cite{Le}.
\begin{thm}\label{torsiongrowth}
For any compact manifold $M$ (or finite CW-complex), if $\Delta_M^{\psi}\ne 0$, then $$\limsup_{\langle\Gamma\rangle\to \infty}\frac{\log|\text{Tor}(H_1(M_{\Gamma};\mathbb{Z}))|}{|\mathbb{Z}^n/\Gamma|}=\mathbb{M}(\Delta_M^{\psi}).$$ Here $\langle\Gamma\rangle\to \infty$ means the minimal distance from nonzero elements in $\Gamma\subset \mathbb{Z}^n$ to zero goes to infinity.
\end{thm}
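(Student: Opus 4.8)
The plan is to convert $|\mathrm{Tor}(H_1(M_\Gamma;\mathbb{Z}))|$ into a sum of logarithms of the values of $\Delta:=\Delta_M^{\psi}$ at finite‑order characters and then to run an equidistribution argument on $T^n$ (I read the right‑hand side as $\log\mathbb{M}(\Delta)$, the left side being a logarithmic growth rate). Let $M_\infty\to M$ be the covering associated to $\ker\psi$, with deck group $\mathbb{Z}^n$, so that $M_\Gamma=M_\infty/\Gamma$ and $C_*(M_\Gamma;\mathbb{Z})=C_*(M_\infty;\mathbb{Z})\otimes_{\mathbb{Z}[\Gamma]}\mathbb{Z}$. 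From the Cartan--Leray spectral sequence $H_p(\Gamma;H_q(M_\infty;\mathbb{Z}))\Rightarrow H_{p+q}(M_\Gamma;\mathbb{Z})$, together with $H_0(M_\infty;\mathbb{Z})=\mathbb{Z}$ (trivial action), one finds that $H_1(M_\Gamma;\mathbb{Z})$ is, up to a free summand and a perturbation of size bounded independently of $\Gamma$ (coming from $H_p(\mathbb{Z}^n;\mathbb{Z})$), the reduced Alexander module $A\otimes_{\mathbb{Z}[\mathbb{Z}^n]}\mathbb{Z}[\mathbb{Z}^n/\Gamma]$ with $A:=H_1(M_\infty;\mathbb{Z})$; hence $\log|\mathrm{Tor}(H_1(M_\Gamma;\mathbb{Z}))|=\log|\mathrm{Tor}(A\otimes_{\mathbb{Z}[\mathbb{Z}^n]}\mathbb{Z}[\mathbb{Z}^n/\Gamma])|+O(1)$.

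Extending scalars from $\mathbb{Z}[\mathbb{Z}^n/\Gamma]$ to $\mathbb{C}[\mathbb{Z}^n/\Gamma]\cong\prod_\zeta\mathbb{C}$, where $\zeta$ runs over the characters of $\mathbb{Z}^n/\Gamma$ (equivalently the torsion points of $(\mathbb{C}^*)^n$ killed by $\Gamma$), the $\zeta$‑component of $A\otimes\mathbb{C}[\mathbb{Z}^n/\Gamma]$ has order $|\Delta(\zeta)|$; using $\Delta\ne 0$ to bound the $\mathbb{Z}[\mathbb{Z}^n]$‑torsion submodule of $A$ (so that the zeroth Fitting ideal controls the torsion up to a correction from the higher Fitting ideals and from the characters on $\{\Delta=0\}$) one gets $\log|\mathrm{Tor}(H_1(M_\Gamma;\mathbb{Z}))|=\sum_{\zeta:\,\Delta(\zeta)\ne 0}\log|\Delta(\zeta)|+(\text{error})$, with the error of size $o(|\mathbb{Z}^n/\Gamma|)$ once the zero set is understood.

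As $\langle\Gamma\rangle\to\infty$ the finite subgroups $\widehat{\mathbb{Z}^n/\Gamma}\subset T^n$ equidistribute for Haar measure (no short dual vectors forces the characters off every proper closed subgroup), so $\frac{1}{|\mathbb{Z}^n/\Gamma|}\sum_\zeta\log^{+}|\Delta(\zeta)|\to\int_{T^n}\log^{+}|\Delta|$. The subtle part is the behaviour near $\{\Delta=0\}$: factor $\Delta=\Delta_0\Delta_1$ by Theorem \ref{cyclotomic}, with $\Delta_1$ the product of the monomial and extended‑cyclotomic factors ($\mathbb{M}(\Delta_1)=1$) and $\Delta_0$ the rest. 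Zeros of $\Delta_1$ lie on subtori but, $\mathbb{M}(\Delta_1)$ being $1$, a one‑variable Baker‑type estimate for cyclotomic polynomials at roots of unity shows they contribute only $o(|\mathbb{Z}^n/\Gamma|)$; meanwhile $\{\Delta_0=0\}$ contains no positive‑dimensional subtorus, so by Laurent's theorem on torsion points on subvarieties of $(\mathbb{C}^*)^n$ it meets the torsion points in a uniformly finite set, and a Baker‑type lower bound $|\Delta_0(\zeta)|\ge c\cdot(\mathrm{ord}\,\zeta)^{-d}$ off that set keeps the remaining characters from crowding it. Upper semicontinuity of $\log|\Delta|$ then gives $\limsup\le\log\mathbb{M}(\Delta)$, while along sequences $\Gamma_k$ with $\langle\Gamma_k\rangle\to\infty$ whose characters stay far from the zero set the Riemann sum converges to $\int_{T^n}\log|\Delta|=\log\mathbb{M}(\Delta)$, so $\limsup=\log\mathbb{M}(\Delta)$; one cannot upgrade $\limsup$ to $\lim$, since for sequences whose characters cluster near $\{\Delta=0\}$ the torsion grows strictly more slowly.

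I expect the crux to be exactly this control near the zero set of $\Delta$: bounding below the values $|\Delta(\zeta)|$ at torsion points not on the zero set (a Baker/linear‑forms‑in‑logarithms input) and ruling out a subtorus inside $\{\Delta=0\}$ (which is precisely what Theorem \ref{cyclotomic} provides, and which is also why the monomial and extended‑cyclotomic factors --- contributing Mahler measure $1$ --- drop out on both sides). This is in essence the Lind--Schmidt--Ward periodic‑point/entropy theorem for the algebraic $\mathbb{Z}^n$‑action attached to $\Delta$. The homological bookkeeping of the first two paragraphs is routine, but it genuinely uses $\Delta_M^{\psi}\ne 0$ and needs care with the higher Fitting ideals and with the possible non‑cyclicity of $A$ over $\mathbb{Z}[\mathbb{Z}^n]$.
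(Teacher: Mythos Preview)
The paper does not prove this theorem at all: it is quoted as a result of Silver--Williams \cite{SW} (a special case) and Le \cite{Le} (the general form), with no argument given beyond the citation. There is therefore no in-paper proof to compare your proposal against.

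That said, your sketch is broadly in line with how those references proceed: relate $\mathrm{Tor}(H_1(M_\Gamma;\mathbb{Z}))$ to the coinvariants of the Alexander module, pass to characters of $\mathbb{Z}^n/\Gamma$ to turn the torsion count into a Riemann sum of $\log|\Delta|$ over torsion points of $T^n$, and then invoke equidistribution together with diophantine lower bounds near the zero set. Your identification of the analytic crux (control of $|\Delta(\zeta)|$ at torsion points near $\{\Delta=0\}$, handled via Baker-type estimates and the Lind--Schmidt--Ward circle of ideas) is accurate, and your reading of the right-hand side as $\log\mathbb{M}(\Delta)$ is the correct normalization. Two cautions, however. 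First, the passage from the Cartan--Leray spectral sequence to an $O(1)$ discrepancy is not automatic: the contributions from $H_p(\mathbb{Z}^n/\Gamma;-)$ and from the higher Fitting ideals are not obviously bounded independently of $\Gamma$, and Le's argument handles this via a careful pseudo-isomorphism/structure-theory reduction rather than a bare spectral-sequence bound. Second, your final remark that ``one cannot upgrade $\limsup$ to $\lim$'' is not right under the stated hypothesis: one of the main points of \cite{Le} is precisely that the limit exists (the paper's use of $\limsup$ is conservative). The mechanism you describe---characters clustering near $\{\Delta=0\}$---is exactly what the diophantine input rules out once $\langle\Gamma\rangle\to\infty$.
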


Actually, \cite{Le} also dealt with the case of $\Delta_M^{\psi}= 0$. In this case, the growth of homological torsion is equal to the Mahler measure of the first nonzero higher order Alexander polynomial. However, we do not need this stronger result in this paper.

\section{Proof of Theorem \ref{main}}

To prove Theorem \ref{main}, we first prove the following two propositions. These two propositions give relations between the Mahler measure of the Alexander polynomial of a $3$-manifold and virtual spectral radii of its monodromy maps.

\begin{prop}\label{localtoglobal}
Let $N$ be an orientable fibered hyperbolic $3$-manifold of finite volume. Suppose that for some fibered structure of $N$, the spectral radius of its monodromy is greater than 1, then $\mathbb{M}(\Delta_N^{\pi})>1$.
\end{prop}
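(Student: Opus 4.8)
The plan is to relate the spectral radius of the monodromy to a single-variable Alexander polynomial of $N$, and then use Proposition \ref{restriction} to pass to the multivariable polynomial $\Delta_N^\pi$. Fix the fibered structure: let $N$ fiber over $S^1$ with fiber $F$ and monodromy $\psi\colon F\to F$, and let $a\in H^1(N;\mathbb{Z})$ be the corresponding primitive cohomology class. First I would recall the standard fact that the single-variable Alexander polynomial $\Delta_N^a(t)$ of a fibered class is, up to units, the characteristic polynomial of the monodromy action $\psi_*$ on $H_1(F;\mathbb{Z})$ (more precisely on $H_1(F;\mathbb{Q})$, after accounting for boundary/torsion subtleties — since $N$ is a finite-volume hyperbolic fibered $3$-manifold, $F$ is a surface with negative Euler characteristic and the relevant homology is finite rank). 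Hence the roots of $\Delta_N^a$ are exactly the eigenvalues of $\psi_*$, so the hypothesis that the spectral radius of the monodromy exceeds $1$ says precisely that $\Delta_N^a$ has a root of modulus $>1$, i.e. $\mathbb{M}(\Delta_N^a)>1$.

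Next I would bridge from $\Delta_N^a$ to $\Delta_N^\pi$. Write $b_1(N)=n$; if $n=1$ then $\Delta_N^\pi=\Delta_N^a$ up to units and we are done immediately, so assume $n\ge 2$. Apply Proposition \ref{restriction} with $\xi$ trivial, $\psi=\pi\colon\pi_1(N)\to H_1(N;\mathbb{Z})/\mathrm{Tor}\cong\mathbb{Z}^n$ the standard homomorphism, and $a\colon\mathbb{Z}^n\to\mathbb{Z}$ the projection induced by the fibered class. The proposition gives $\Delta_N^{a\circ\psi}=a(\Delta_N^\pi)\cdot p(t)$ where $p(t)$ is $1$ or a cyclotomic-type factor ($t-1$), and in particular $\mathbb{M}(\Delta_N^{a}) = \mathbb{M}(\Delta_N^{a\circ\psi})=\mathbb{M}(a(\Delta_N^\pi))$ since $\mathbb{M}(p)=1$. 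Now $a(\Delta_N^\pi)$ is the single-variable polynomial obtained by specializing the multivariable polynomial $\Delta_N^\pi$ along the homomorphism $a$. By the integral (Jensen-type) definition of the Mahler measure and an averaging argument over the torus — specializing a multivariable polynomial to a generic line and integrating cannot increase the Mahler measure, so $\mathbb{M}(a(\Delta_N^\pi))\le \mathbb{M}(\Delta_N^\pi)$ — we conclude $\mathbb{M}(\Delta_N^\pi)\ge \mathbb{M}(a(\Delta_N^{\pi}))=\mathbb{M}(\Delta_N^a)>1$.

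The main obstacle I anticipate is the first step: carefully identifying $\Delta_N^a$ with the characteristic polynomial of the monodromy on homology, with the correct bookkeeping when $F$ has boundary (the map $H_1(F)\to H_1(N)$ and the interplay between $H_1(F;\mathbb{Z})$, its free part, and the twisted chain complex over $\mathbb{Z}[t^{\pm1}]$). One has to be slightly careful that extra cyclotomic factors (from boundary components permuted by $\psi$, or from the $H_0$ contribution) do not obscure the root of modulus $>1$; but such factors only contribute roots on the unit circle, so they affect neither the existence of an off-circle root nor the strict inequality $\mathbb{M}(\Delta_N^a)>1$. The Mahler-measure monotonicity under specialization in the second step is standard (it follows from Jensen's formula applied fiberwise, or can be cited), so I expect it to be routine. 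Assembling these pieces yields $\mathbb{M}(\Delta_N^\pi)>1$ as claimed.
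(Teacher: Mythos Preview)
Your setup is fine and agrees with the paper: identifying $\Delta_N^a$ with the characteristic polynomial of $\psi_*$ (hence monic, hence $\mathbb{M}(\Delta_N^a)>1$), and invoking Proposition~\ref{restriction} to get $\mathbb{M}(\Delta_N^a)=\mathbb{M}(a(\Delta_N^{\pi}))$.

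The gap is the last step. The inequality $\mathbb{M}(a(\Delta_N^{\pi}))\le \mathbb{M}(\Delta_N^{\pi})$ that you invoke is \emph{not} a general fact about specialization of multivariable polynomials, and your ``averaging argument'' does not prove it. Writing $\log\mathbb{M}(P)=\int_{T^{n-1}}\log\mathbb{M}\big(P(\,\cdot\,,z_2,\dots,z_n)\big)\,ds_2\cdots ds_n$ by Fubini shows that $\log\mathbb{M}(P)$ is the \emph{average} of the fiberwise log-Mahler measures; but $a(P)=P(\,\cdot\,,1,\dots,1)$ corresponds to a single point $(1,\dots,1)\in T^{n-1}$, and a point value can certainly exceed an average. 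A concrete counterexample: for $P(z_1,z_2)=1+z_1+z_2$ one has $\mathbb{M}(P)=\exp L'(\chi_{-3},-1)\approx 1.38$, while $a(P)=P(t,1)=t+2$ is monic with $\mathbb{M}(a(P))=2>\mathbb{M}(P)$. So the claimed monotonicity fails even when the specialization is monic, and it cannot simply be ``cited''.

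The paper closes this gap not by an inequality but by a structural argument using Boyd's theorem (Theorem~\ref{cyclotomic}): assuming $\mathbb{M}(\Delta_N^{\pi})=1$, one concludes $\Delta_N^{\pi}$ is a product of monomials and extended cyclotomic polynomials; pushing through $a$ yields that $a(\Delta_N^{\pi})$ is a product of integer constants, monomials and ordinary cyclotomic polynomials; since $a$ is a fibered class the specialization is monic, so the constant factors are $\pm 1$ and hence $\mathbb{M}(a(\Delta_N^{\pi}))=1$, contradicting $\mathbb{M}(a(\Delta_N^{\pi}))=\mathbb{M}(\Delta_N^a)>1$. To repair your proof you need to replace the false monotonicity step with this contrapositive appeal to Theorem~\ref{cyclotomic}; the monicity of $a(\Delta_N^{\pi})$ (which you correctly identify) is exactly what rules out stray integer factors after specialization.
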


\begin{proof}
Suppose that the fibered structure mentioned in the assumption corresponds to $a\in H^1(N;\mathbb{Z})$, and we denote its monodromy by $\psi:\Sigma\to \Sigma$. Then the Alexander polynomial $\Delta_N^a$ is equal to the characteristic polynomial of $\psi_*:H_1(\Sigma;\mathbb{Z})\to H_1(\Sigma;\mathbb{Z})$. So $\Delta_N^a$ is a polynomial in $\mathbb{Z}[t]$, with leading coefficient $1$. The assumption of this proposition implies that $\mathbb{M}(\Delta_N^a)>1$.

If $b_1(N)=1$, then $\Delta_N^{\pi}$ equals $\Delta_N^a$, so $\mathbb{M}(\Delta_N^{\pi})>1$. Now we assume that $b_1(N)\geq 2$.

We suppose that $\mathbb{M}(\Delta_N^{\pi})=1$, and get a contradiction. Since $\Delta_N^{\pi}$ has integer coefficients, Theorem \ref{cyclotomic} implies that $\Delta_N^{\pi}$ is a product of monomials and extended cyclotomic polynomials. For the cohomology class $a\in H^1(N;\mathbb{Z})$ and any extended cyclotomic polynomial $P\in \mathbb{Z}[H_1(N;\mathbb{Z})/\text{Tor}]$, it is easy to see that $a(P)=t^b\Psi_n(t^v)$ for some integers $v,b,n$. So $a(P)$ either equals a product of $t^b$ and a few cyclotomic polynomials, or equals $kt^b$ for some $k\in \mathbb{Z}$ (when $v=0$). This implies that $a(\Delta_N^{\pi})$ is a product of integers, monomials and cyclotomic polynomials.

By Proposition \ref{restriction}, and the fact that $\Delta_N^a$ has leading coefficient $1$, $a(\Delta_N^{\pi})=\frac{\Delta_N^a}{p(t)}$ is a nonzero polynomial with leading coefficient $1$. So $a(\Delta_N^{\pi})$ is only a product of monomials and cyclotomic polynomials, which has Mahler measure $\mathbb{M}(a(\Delta_N^{\pi}))=1$. However, this contradicts Proposition \ref{restriction}, which claims $$\mathbb{M}(a(\Delta_N^{\pi}))=\mathbb{M}(\Delta_N^a)>1.$$

This finishes the proof of $\mathbb{M}(\Delta_N^{\pi})>1$.
\end{proof}

\begin{prop}\label{globaltolocal}
Let $N$ be an orientable fibered hyperbolic $3$-manifold of finite volume, and suppose that $\mathbb{M}(\Delta_N^{\pi})>1$. Then for any fibered structure of $N$, the corresponding monodromy has the spectral lifting property.
\end{prop}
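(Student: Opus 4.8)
The plan is to produce, for the given fibered structure of $N$ --- say the one corresponding to a primitive class $a_0\in H^1(N;\mathbb Z)$, with fiber $\Sigma$ and monodromy $\psi:\Sigma\to\Sigma$ --- a connected finite cover $\tilde\Sigma\to\Sigma$ to which $\psi$ genuinely lifts and whose lift $\tilde\psi$ has spectral radius $>1$. If $b_1(N)=1$ this is immediate, since then $\Delta_N^{\pi}=\Delta_N^{a_0}=\det(tI-\psi_*)$ is a monic integral polynomial, so $\mathbb M(\Delta_N^{\pi})>1$ already forces $\psi$ itself to have spectral radius $>1$. So assume $b_1(N)=n\ge 2$, identify $H_1(N;\mathbb Z)/\mathrm{Tor}=\mathbb Z^n$, and write $\mathbb Z^n=\ker(a_0)\oplus\mathbb Z\sigma$ with $a_0(\sigma)=1$; fix a basis $t_1,\dots,t_{n-1}$ of $\ker(a_0)$ and set $t_n=\sigma$. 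By the Wang exact sequence of the fibration, $\ker(a_0)$ is a quotient of $H_1(\Sigma;\mathbb Z)$ on which $\psi_*$ acts as the identity. For each $d\ge 1$ put $G_d=\ker(a_0)/d\ker(a_0)\cong(\mathbb Z/d)^{n-1}$ and let $p_d:N_d\to N$ be the corresponding finite abelian cover. Because $\ker(\mathbb Z^n\to G_d)$ contains $\sigma$ with $a_0(\sigma)=1$, this cover is ``vertical'': $N_d$ is the mapping torus of a lift $\tilde\psi_d$ of $\psi$ to a connected finite cover $\tilde\Sigma_d\to\Sigma$, the pullback $p_d^*a_0$ is primitive and represents this fibration, and $\Delta_{N_d}^{p_d^*a_0}=\det(tI-(\tilde\psi_d)_*)$ is again monic and integral. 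Hence it suffices to produce some $d$ with $\mathbb M(\Delta_{N_d}^{p_d^*a_0})>1$.

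To compute $\Delta_{N_d}^{p_d^*a_0}$ from $\Delta_N^{\pi}$, apply Theorem \ref{abeliancover} to $p_d:N_d\to N$, getting
$$\Delta_{N_d}^{\hat\pi}(t_1,\dots,t_n)=\pm\prod_{\xi\in\hat{G}_d}\Delta_N^{\pi}\big(\bar\xi(t_1)t_1,\dots,\bar\xi(t_n)t_n\big).$$
Specializing each $t_i\mapsto t^{a_0(t_i)}$ --- that is, $t_i\mapsto 1$ for $i<n$ and $t_n\mapsto t$ --- is restriction along $a_0$, and by Proposition \ref{restriction} it changes the Mahler measure only through a factor $p(t)$ with $\mathbb M(p)=1$. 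Since $t_n$ maps to $0$ in $G_d$ we have $\bar\xi(t_n)=1$ for every $\xi$, while $(\bar\xi(t_1),\dots,\bar\xi(t_{n-1}))$ runs exactly once over $(\mu_d)^{n-1}$ as $\xi$ ranges over $\hat{G}_d$. Using multiplicativity of $\mathbb M$ and that $\Delta_N^{\pi}\ne 0$ (as $N$ fibers), this gives
$$\log\mathbb M\big(\Delta_{N_d}^{p_d^*a_0}\big)=\sum_{\omega_1,\dots,\omega_{n-1}\in\mu_d}\ \log\mathbb M_t\big(\Delta_N^{\pi}(\omega_1,\dots,\omega_{n-1},t)\big),$$
and after dividing by $|G_d|=d^{n-1}$ the right-hand side is a Riemann-type sum for $\int_{T^n}\log|\Delta_N^{\pi}|$ over the grid $(\mu_d)^{n-1}\times S^1\subset T^n$.

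The last step is to let $d\to\infty$: by a Boyd--Lawton type equidistribution result, $\tfrac1{d^{n-1}}\log\mathbb M(\Delta_{N_d}^{p_d^*a_0})\to\int_{T^n}\log|\Delta_N^{\pi}|=\log\mathbb M(\Delta_N^{\pi})>0$, so $\mathbb M(\Delta_{N_d}^{p_d^*a_0})>1$ for all large $d$, which completes the proof. I expect this convergence to be the main obstacle: $\log|\Delta_N^{\pi}|$ is singular along the zero set of $\Delta_N^{\pi}$ on $T^n$, so one must control how the roots-of-unity grid accumulates near that set --- this is exactly the analytic content of the Boyd--Lawton circle of ideas, and everything else (the Wang-sequence description of $\ker(a_0)$, the verticality of $N_d$, and the bookkeeping with the finite-index image of $\hat\pi$ and the harmless factor $p(t)$ in Proposition \ref{restriction}) is routine, though it uses that $N$ is a finite-volume hyperbolic $3$-manifold, so that it has empty or toral boundary and fibers of negative Euler characteristic.
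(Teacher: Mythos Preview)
Your argument is correct and runs closely parallel to the paper's, using the same machinery (the finite-abelian-cover product formula of Theorem~\ref{abeliancover} together with Proposition~\ref{restriction}) applied to covers that kill only directions in $\ker(a_0)$. Two differences are worth pointing out.

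First, your choice of cover is cleaner than the paper's. By taking $G_d=\ker(a_0)/d\ker(a_0)$ you guarantee from the outset that the fiber cover $\tilde\Sigma_d\to\Sigma$ is connected and that $\psi$ itself (not a power) lifts; this lets you skip the paper's endgame, where the fiber of $\hat N$ may be disconnected and one must pass to a power, then to a characteristic cover, and finally invoke Lemma~\ref{simple} to get back to a genuine lift of $\psi$.

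Second, you overestimate the analytic difficulty. The paper does \emph{not} take a limit: it uses Fubini and continuity to find a single rational point $(r_2,\dots,r_n)$ with $\log\mathbb{M}_t\big(\Delta_N^{\pi}(\,\cdot\,,e^{2\pi i r_2},\dots,e^{2\pi i r_n})\big)>0$, and then observes that every term in the product is $\ge 1$ --- because, by Proposition~\ref{restriction} applied with the character $\xi$, each specialization $\Delta_N^{\pi}(\omega,t)$ is (up to a monic factor $p(t)$) the twisted polynomial $\Delta_N^{\xi\otimes a_0}$, which is the characteristic polynomial of $\psi_*$ on $H_1(\Sigma;\mathbb{C}_\xi)$ and hence monic. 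One positive factor plus all factors $\ge 1$ already gives $\mathbb{M}(\Delta_{\hat N}^{p^*a_0})>1$. Your averaging argument works too, but the ``Boyd--Lawton obstacle'' you flag is illusory: the same monicity observation shows that the leading $t_n$-coefficient of $\Delta_N^{\pi}$ is $\pm$ a monomial in $t_1,\dots,t_{n-1}$, so $\omega\mapsto\log\mathbb{M}_t(\Delta_N^{\pi}(\omega,t))$ is a \emph{continuous} function on $T^{n-1}$ and your Riemann sum converges for the most elementary reason. No control of the grid near the zero set of $\Delta_N^{\pi}$ is needed.
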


\begin{proof}
Take an arbitrary fibered structure of $N$, which corresponds to a cohomology class $b_1\in H^1(N;\mathbb{Z})$, and let the corresponding monodromy be $\psi:\Sigma\to \Sigma$. Take a $\mathbb{Z}$-basis $(b_1,b_2,\cdots,b_n)$ of $H^1(N;\mathbb{Z})$ containing $b_1$, and denote the dual basis of $H_1(N;\mathbb{Z})/\text{Tor}$ by $(t_1,t_2,\cdots,t_n)$.

Since $\mathbb{M}(\Delta_N^{\pi})>1$, by the definition of the Mahler Measure, we have $$\int_{T^n}\log{|\Delta_N^{\pi}({\bf z})|}\ \text{d}{\bf s}>0.$$ Here $z_k=e^{2\pi is_k}$, and $(z_1,z_2,\cdots,z_n)$ corresponds to the basis $(t_1,t_2,\cdots,t_n)$ of $H_1(N;\mathbb{Z})/\text{Tor}$.

We first integrate along the first coordinate and get $$\int_{T^{n-1}}\Big(\int_{S^1}\log{\big|\Delta_N^{\pi}(e^{2\pi i s_1},e^{2\pi i s_2},\cdots,e^{2\pi i s_n})\big|}\ \text{d}s_1\Big)\text{d}s_2\cdots\text{d}s_n>0.$$ So there exists a point $(r_2,\cdots,r_n)\in I^{n-1}$ such that
\begin{equation}\label{positive}
\int_{S^1}\log{\big|\Delta_N^{\pi}(e^{2\pi i s_1},e^{2\pi i r_2},\cdots,e^{2\pi i r_n})\big|}\ \text{d}s_1>0.
\end{equation}
By continuity, we can suppose that all of $r_2,\cdots,r_n$ are rational numbers.

Let $k$ be the least common multiple of the denominators of $r_2,\cdots,r_n$. We consider the finite abelian cover $p:\hat{N}\to N$ such that $\pi_1(\hat{N})$ is the kernel of the homomorphism $$\pi_1(N)\xrightarrow{\pi} H_1(N;\mathbb{Z})/\text{Tor}\cong \mathbb{Z}^n\xrightarrow{f} T^{n-1}.$$ Here the basis $(t_1,t_2,\cdots,t_n)$ of $H_1(N;\mathbb{Z})/\text{Tor}$ is identified with the standard basis of $\mathbb{Z}^n$, and the homomorphism $f:\mathbb{Z}^n\to T^{n-1}$ is defined by $$(k_1,k_2,\cdots,k_n)\to (e^{2\pi i\frac{k_2}{k}},\cdots,e^{2\pi i\frac{k_n}{k}}).$$ The deck transformation group $G$ of $\hat{N}\to N$ is isomorphic to $\mathbb{Z}_k^{n-1}$.

Now we consider a representation $\xi_0:H_1(N;\mathbb{Z})/\text{Tor}\to S^1$ defined by $\xi_0(t_1)=1$ and $\xi_0(t_i)=e^{-2\pi i r_i}$ for $i=2,\cdots,n$. Then $\xi_0$ vanishes on the kernel of $$H_1(N;\mathbb{Z})/\text{Tor}\cong\mathbb{Z}^n\xrightarrow{f}T^{n-1}.$$ So $\xi_0$ induces a representation $G\to S^1$, and we still denote this element in $\hat{G}$ by $\xi_0$.

Now we have two homomorphisms $\pi:\pi_1(N)\to H_1(N;\mathbb{Z})/\text{Tor}$ and $$\hat{\pi}:\pi_1(\hat{N})\xrightarrow{p_*} \pi_1(N)\xrightarrow{\pi} H_1(N;\mathbb{Z})/\text{Tor}.$$ We consider $b_1\in H^1(N;\mathbb{Z})$ as a homomorphism from $H_1(N;\mathbb{Z})/\text{Tor}$ to $\mathbb{Z}$, then it induces a homomorphism $$\mathbb{Z}[t_1^{\pm},\cdots,t_n^{\pm}]\cong \mathbb{Z}[H_1(N;\mathbb{Z})/\text{Tor}]\to \mathbb{Z}[\mathbb{Z}]\cong \mathbb{Z}[t^{\pm}],$$ which maps $t_1$ to $t$, and maps $t_2,\cdots,t_n$ to $1$.

By applying $b_1$ to the equation in Theorem \ref{abeliancover} and using $\xi(t_1)=1$ for any $\xi\in \hat{G}$, we get $$b_1\big(\Delta_{\hat{N}}^{\hat{\pi}}(t_1,t_2,\cdots,t_n)\big)=\pm\Pi_{\xi\in \hat{G}}b_1\big(\Delta_N^{\pi}(t_1,\bar{\xi}(t_2)t_2,\cdots,\bar{\xi}(t_n)t_n)\big)=\pm\Pi_{\xi\in \hat{G}}\Delta_N^{\pi}(t,\bar{\xi}(t_2),\cdots,\bar{\xi}(t_n)).$$

Then Proposition \ref{restriction} implies
\begin{equation}\label{product}
\mathbb{M}\Big(\Delta_{\hat{N}}^{p^*(b_1)}(t)\Big)=\mathbb{M}\Big(\Delta_{\hat{N}}^{b_1\circ \hat{\pi}}(t)\Big)=\mathbb{M}\Big(b_1\big(\Delta_{\hat{N}}^{\hat{\pi}}(t_1,t_2,\cdots,t_n)\big)\Big)=\Pi_{\xi\in\hat{G}}\mathbb{M}\Big(\Delta_N^{\pi}(t,\bar{\xi}(t_2),\cdots,\bar{\xi}(t_n))\Big).
\end{equation}

Since $b_1\in H^1(N;\mathbb{Z})$ is a fibered class, for any $\xi\in\hat{G}$, the polynomial $$\Delta_N^{\pi}(t,\bar{\xi}(t_2),\cdots,\bar{\xi}(t_n))=b_1\big(\Delta_{\hat{N}}^{\hat{\pi}}(t_1,\bar{\xi}(t_2)t_2,\cdots,\bar{\xi}(t_2)t_n)\big)$$ has leading coefficient $1$. So each Mahler measure in the product on the right hand side of equation (\ref{product}) is greater or equal to $1$. Moreover, since $\xi_0$ lies in $\hat{G}$, equation (\ref{positive}) implies that the term in the product of equation (\ref{product}) corresponding to $\xi_0$ is greater than $1$. So we have $$\mathbb{M}(\Delta_{\hat{N}}^{p^*(b_1)}(t))>1.$$

Since $b_1$ is a fibered class of $N$, $p^*(b_1)$ is a fibered class of $\hat{N}$, and let the corresponding monodromy be $\psi':\Sigma'\to \Sigma'$. Here $\Sigma'$ might be a disconnected surface, since $p^*(b_1)$ may not be a primitive class. Since $p^*(b_1)$ is a fibered class, $\Delta_{\hat{N}}^{p^*(b_1)}(t)$ is the characteristic polynomial of $\psi'_*:H_1(\Sigma';\mathbb{Z})\to H_1(\Sigma';\mathbb{Z})$. Then $\mathbb{M}(\Delta_{\hat{N}}^{p^*(b_1)}(t))>1$ implies the spectral radius of $\psi'$ is greater than $1$. Then the restriction of a power of $\psi'$ ($l$-th power) on a connected subsurface $\hat{\Sigma}\subset \Sigma'$ is a pseudo-Anosov map on a connected surface, and we denote this map by $\hat{\psi}:\hat{\Sigma}\to \hat{\Sigma}$. Then $\hat{\psi}:\hat{\Sigma}\to \hat{\Sigma}$ is a lift of $\psi^l:\Sigma\to \Sigma$, and its spectral radius is greater than $1$.

Let $\tilde{\Sigma}\to \Sigma$ be a finite characteristic cover factoring through $\hat{\Sigma}\to \Sigma$. Then $\psi:\Sigma\to \Sigma$ lifts to $\tilde{\psi}:\tilde{\Sigma}\to \tilde{\Sigma}$, and there exists a positive integer $k$ such that $\hat{\psi}^k$ lifts to $\bar{\psi}:\tilde{\Sigma}\to \tilde{\Sigma}$.

Since the spectral radius of $\hat{\psi}^k$ is greater than $1$, Lemma \ref{simple} implies that the spectral radius of $\bar{\psi}:\tilde{\Sigma}\to \tilde{\Sigma}$ is also greater than $1$. Since $\tilde{\psi}:\tilde{\Sigma}\to \tilde{\Sigma}$ and $\bar{\psi}:\tilde{\Sigma}\to \tilde{\Sigma}$ are lifts of $\psi:\Sigma\to \Sigma$ and $\psi^{kl}:\Sigma\to \Sigma$ respectively, $\tilde{\psi}$ and $\bar{\psi}$ have some common power. So the spectral radius of $\tilde{\psi}:\tilde{\Sigma}\to \tilde{\Sigma}$ is greater than $1$, which is the desired finite cover of $\psi:\Sigma\to \Sigma$.
\end{proof}

Now we are ready to prove Theorem \ref{main}.

\begin{proof}
(1) implies (2): We consider the fibered manifold $N=M_{\tilde{\phi}}=\tilde{S}\times I/(x,0)\sim (\tilde{\phi}(x),1)$. Then $N$ is a finite cover of $M_{\phi}$, and $\tilde{\phi}:\tilde{S}\to \tilde{S}$ is the monodromy of a fibered structure of $N$, such that its spectral radius is greater than $1$. Then Proposition \ref{localtoglobal} implies that $\mathbb{M}(\Delta_N^{\pi})>1$.

\bigskip

(2) implies (3): Take an arbitrary fibered class $b\in H^1(N;\mathbb{Z})$ of $N$, then Proposition \ref{globaltolocal} implies that the corresponding monodromy has the spectral lifting property. Let the mapping torus corresponding to this finite cover of monodromy be denoted by $\hat{N}$. Then $\hat{N}$ is a finite cover of $N$, and the spectral radius of the monodromy of this fibered structure is greater than $1$.

Take a common finite cover of $\hat{N}$ and $N'$, and denoted it by $\tilde{N}$. Then the above fibered structure of $\hat{N}$ lifts to a fibered structure of $\tilde{N}$, and the spectral radius of the corresponding monodromy is still greater than $1$. Then Proposition \ref{localtoglobal} implies that $\mathbb{M}(\Delta_{\tilde{N}})>1$.

By applying Proposition \ref{globaltolocal} again, we know that the monodromy of any fibered structure of $\tilde{N}$ has the spectral lifting property. Since any fibered structure of $N'$ lifts to a fibered structure of $\tilde{N}$, the argument at the end of the proof of Proposition \ref{globaltolocal} implies that the monodromy of any fibered structure of $N'$ has the spectral lifting property.

\bigskip

(3) implies (1): This is trivially true, since $M_{\phi}$ is a finite cover ($1$-sheet cover) of itself.
\end{proof}

\bibliographystyle{amsalpha}

\end{document}